\newtheorem{thm}{Theorem}[section]
\newtheorem{lem}[thm]{Lemma}
\theoremstyle{definition}
\newtheorem{defn}[thm]{Definition}
\newtheorem{ex}[thm]{Example}
\theoremstyle{remark} \numberwithin{equation}{section}
\newcommand{\ind}{\operatorname{ind_{{}_{FP}}}}
\begin{document}
\title[]{Positive Solution for a Hadamard Fractional Singular Boundary Value Problem of Order $\mu\in(2,\,3)$}
\date{}
\author{Naseer Ahmad Asif}
\address{Department of Mathematics, School of Science, University of Management and Technology, C-II Johar Town, 54770 Lahore, Pakistan}%
\email{naseerasif@yahoo.com}%
\keywords{Hadamard fractional; singular BVPs; positive solutions; fixed point index}

\begin{abstract}
{In this article, we establish the existence of positive solution for the following Hadamard fractional singular boundary value problem
\begin{align*}
{}^{H}D_{a^{+}}^{\,\mu}x(t)+f(t,x(t))&=0,\hspace{0.4cm}t\in(a,\,b),\hspace{0.4cm}0<a<b<\infty,\hspace{0.4cm}2<\mu<3,\\
x(a)=a\,x'(a)=x(b)&=0,
\end{align*}
where $f:(a,\,b)\times(0,\infty)\rightarrow(0,\infty)$ is continuous and singular at $t=a$, $t=b$ and $x=0$. Further, ${}^{H}D_{a^{+}}^{\,\mu}$ is Hadamard fractional derivative of order $\mu$.}
\end{abstract}

\maketitle

\section{Introduction}

In this article, we are concerned with the existence of positive solution for the following Hadamard fractional singular boundary value problem (SBVP)
\begin{equation}\label{hbvp}\begin{split}
{}^{H}D_{a^{+}}^{\,\mu}x(t)+f(t,x(t))&=0,\hspace{0.4cm}t\in(a,\,b),\hspace{0.4cm}0<a<b<\infty,\hspace{0.4cm}2<\mu<3,\\
x(a)=a\,x'(a)=x(b)&=0,
\end{split}\end{equation}
where ${}^{H}D_{a^{+}}^{\,\mu}$ is Hadamard fractional derivative of order $\mu$. Moreover, $f:(a,\,b)\times(0,\infty)\rightarrow(0,\infty)$ is continuous and singular at $t=a$, $t=b$ and $x=0$. We provide sufficient conditions for the existence of positive solution of the Hadamard fractional SBVP \eqref{hbvp} using fixed point index for a completely continuous map defined on a cone. By a positive solution $x$ of the Hadamard fractional SBVP \eqref{hbvp} we mean $x\in C[a,\,b]$, ${}^{H}D_{a^{+}}^{\,\mu}x\in C(a,\,b)$, satisfies \eqref{hbvp} and $x(t)>0$ for $t\in(a,\,b)$.

BVPs involving fractional order differentials have become an emerging area of recent research in science, engineering and mathematics, \cite{miller,podlubny,kilbas,dhelm}. Applying results of nonlinear functional analysis and fixed point theory, many articles have been devoted to the existence of solutions and existence of positive solutions for fractional order BVPs, for details see \cite{szhang,zhaliuwulu,ahmad,jong,agaluc,asif1,wang,alsaluca,xuliubaiwu}. However, most of the articles have been devoted to Riemann-Liouville or Caputo fractional derivatives \cite{szhang,zhaliuwulu,jong,agaluc,wang,alsaluca}, whereas few are devoted to Caputo-Fabrizio or Hadamard fractional derivatives \cite{ahmad,asif1,xuliubaiwu}. During the past few decades great work has been done on the literature of SBVPs, for example \cite{agarwaloregan,asif} are excellent monographs. However, fractional order BVPs having singularity with respect to both time and space variables are few \cite{szhang,asif1,wang}.

Recently, the author \cite{asif1} generalized the definition of Caputo-Fabrizio fractional derivative for an arbitrary order and introduced the notion of Caputo-Fabrizio fractional left and right derivatives denoted by ${}^{CF}D_{0^{+}}^{\,\mu}$ and ${}^{CF}D_{0^{-}}^{\,\mu}$, respectively, and established the existence of symmetric positive solutions for the following Caputo-Fabrizio fractional singular integro-differential BVP
\begin{align*}
(2-\mu)\,{}^{CF}D_{0}^{\,\mu}x(t)+f(t,x(t))&=\left(\frac{\mu-1}{2-\mu}\right)^{2}
\begin{cases}
\int_{t}^{0}e^{-\frac{\mu-1}{2-\mu}(\tau-t)}\,x(\tau)d\tau,\hspace{0.4cm}&t\in(-1,0]\\
\int_{0}^{t}e^{-\frac{\mu-1}{2-\mu}(t-\tau)}\,x(\tau)d\tau,\hspace{0.4cm}&t\in[0,1),
\end{cases}\\
x(\pm1)=x'(0^{\pm})&=0,\hspace{6.3cm}\mu\in(1,2),
\end{align*}
where ${}^{CF}D_{0}^{\,\mu}x(t)={}^{CF}D_{0^{+}}^{\,\mu}x(t)$ for $t\geq0$, ${}^{CF}D_{0}^{\,\mu}x(t)={}^{CF}D_{0^{-}}^{\,\mu}x(t)$ for $t\leq0$. Moreover, $f$ is singular at $t=-1$, $t=1$ and $x=0$.

An important feature of the present manuscript is that Hadamard fractional derivative ${}^{H}D_{a^{+}}^{\,\mu}$ in SBVP \eqref{hbvp} has been considered for an arbitrary $a>0$ and existence of positive solution has been formulated over an arbitrary interval $[a,\,b]\subset(0,\infty)$. Consequently, SBVP of the type \eqref{hbvp} has not been considered before.

The manuscript is organized as follows. In Section \ref{pre}, we recall some definitions from fractional calculus and some preliminary lemmas for construction of the Green's function associated with linear operator in \eqref{hbvp}. Some properties of the Green's function are also presented in the same section. In Section \ref{main}, by using the fixed point index for a completely continuous map on a cone of Banach space and results of functional analysis, we formulate the existence of positive solution in Theorem \ref{mainth}. Further, we give an example to illustrate our main theorem.

\section{Preliminaries}\label{pre}

In this section, we shall state some necessary definitions and preliminary lemmas. The following definitions and lemmas are known \cite{kilbas}.

\begin{defn}\cite{kilbas}
The Hadamard fractional left integral of order $\mu>0$ of a function $x:[a,\infty)\rightarrow\mathbb{R}$, $a>0$, is defined as
\begin{align*}
{}^{H}I_{a^{+}}^{\,\mu}\,x(t)=\frac{1}{\Gamma(\mu)}\int_{a}^{t}\left(\ln\frac{t}{\tau}\right)^{\mu-1}x(\tau)\frac{d\tau}{\tau},\hspace{0.4cm}t\geq a.
\end{align*}
\end{defn}

\begin{defn}\cite{kilbas}
The Hadamard fractional left derivative of a function $x:[a,\infty)\rightarrow\mathbb{R}$, $t^{n-1}x^{(n-1)}(t)\in AC[a,\infty)$, $a>0$, $n\in\mathbb{N}$, of order $\mu\in(n-1,n)$ is defined as
\begin{align*}{}^{H}D_{a^{+}}^{\,\mu}x(t)=\frac{1}{\Gamma(n-\mu)}\left(t\frac{d}{dt}\right)^{n}\int_{a}^{t}\left(\ln\frac{t}{\tau}\right)^{n-\mu-1}x(\tau)\frac{d\tau}{\tau},\hspace{0.4cm}t>a.\end{align*}
\end{defn}

\begin{lem}\cite{kilbas}
For $\mu\in(n-1,n)$, $n\in\mathbb{N}$, $y\in L[a,\infty)$, $a>0$, the Hadamard fractional differential equation ${}^{H}D_{a^{+}}^{\,\mu}x(t)+y(t)=0$, $t>a$, has a general solution
\begin{align*}
x(t)=\sum_{k=1}^{n}c_{k}\left(\ln\frac{t}{a}\right)^{\mu-k}-\frac{1}{\Gamma(\mu)}\int_{a}^{t}\left(\ln\frac{t}{\tau}\right)^{\mu-1}y(\tau)\frac{d\tau}{\tau},\hspace{0.4cm}t\geq a,
\end{align*}
where $c_{k}\in\mathbb{R}$, $k=1,2,\cdots,n$.
\end{lem}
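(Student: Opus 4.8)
The plan is to reduce the Hadamard problem to a classical Riemann--Liouville fractional differential equation by means of the logarithmic change of variables $t=ae^{s}$, which is the standard device linking the two calculi. Writing $s=\ln(t/a)$, $\phi(s)=x(ae^{s})$ and $g(s)=y(ae^{s})$ for $s\geq0$, a direct computation gives $\tfrac{d}{ds}\phi(s)=\bigl(t\tfrac{d}{dt}x\bigr)\big|_{t=ae^{s}}$, so that the Hadamard operator $\delta=t\,d/dt$ corresponds exactly to ordinary differentiation in the $s$-variable. First I would verify, writing ${}^{RL}I_{0^{+}}^{\,\mu}$ and ${}^{RL}D_{0^{+}}^{\,\mu}$ for the Riemann--Liouville integral and derivative, the two transport identities
\begin{align*}
{}^{H}I_{a^{+}}^{\,\mu}x(ae^{s})&=\frac{1}{\Gamma(\mu)}\int_{0}^{s}(s-\sigma)^{\mu-1}\phi(\sigma)\,d\sigma={}^{RL}I_{0^{+}}^{\,\mu}\phi(s),\\
{}^{H}D_{a^{+}}^{\,\mu}x(ae^{s})&={}^{RL}D_{0^{+}}^{\,\mu}\phi(s),
\end{align*}
the first of which follows by substituting $\tau=ae^{\sigma}$ (so that $d\tau/\tau=d\sigma$ and $\ln(t/\tau)=s-\sigma$) in the defining integral, and the second by combining this with the identification $\delta^{n}\leftrightarrow(d/ds)^{n}$.

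Second, under this correspondence the equation ${}^{H}D_{a^{+}}^{\,\mu}x(t)+y(t)=0$ becomes the Riemann--Liouville equation ${}^{RL}D_{0^{+}}^{\,\mu}\phi(s)+g(s)=0$ on $s>0$, with $g$ integrable because the measure transforms as $d\tau/\tau=d\sigma$ and $y\in L[a,\infty)$. For the latter equation the general solution is classical: applying ${}^{RL}I_{0^{+}}^{\,\mu}$ and using ${}^{RL}D_{0^{+}}^{\,\mu}\,{}^{RL}I_{0^{+}}^{\,\mu}=\mathrm{Id}$ together with the fact that the kernel of ${}^{RL}D_{0^{+}}^{\,\mu}$ on $(n-1,n)$ is spanned by $s^{\mu-1},\dots,s^{\mu-n}$ yields $\phi(s)=\sum_{k=1}^{n}c_{k}s^{\mu-k}-{}^{RL}I_{0^{+}}^{\,\mu}g(s)$. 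Substituting back $s=\ln(t/a)$, and recalling $s^{\mu-k}=(\ln(t/a))^{\mu-k}$ and ${}^{RL}I_{0^{+}}^{\,\mu}g(s)={}^{H}I_{a^{+}}^{\,\mu}y(t)$, then produces exactly the stated formula.

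To make the argument self-contained I would also record the two ingredients of the Riemann--Liouville step directly in the Hadamard setting. That each $(\ln(t/a))^{\mu-k}$ lies in the kernel follows from the power rule ${}^{H}D_{a^{+}}^{\,\mu}(\ln(t/a))^{\beta}=\frac{\Gamma(\beta+1)}{\Gamma(\beta-\mu+1)}(\ln(t/a))^{\beta-\mu}$, since for $\beta=\mu-k$ with $k\in\{1,\dots,n\}$ the exponent $\beta-\mu+1=1-k$ is a non-positive integer and $1/\Gamma(1-k)=0$. That the integral term solves the inhomogeneous equation follows from the composition rule ${}^{H}D_{a^{+}}^{\,\mu}\,{}^{H}I_{a^{+}}^{\,\mu}y=y$, itself a transport of the corresponding Riemann--Liouville identity.

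The main obstacle I anticipate is the rigorous justification of the operator transport identities rather than any single computation: one must confirm that the regularity hypothesis $t^{n-1}x^{(n-1)}\in AC[a,\infty)$ needed to define ${}^{H}D_{a^{+}}^{\,\mu}$ corresponds precisely to $\phi^{(n-1)}\in AC$ under $t=ae^{s}$, so that the Riemann--Liouville derivative is legitimately defined for $\phi$, and that the change of variables is a bijection between the two solution spaces carrying one kernel onto the other. Once this equivalence is established, verifying the claimed representation and counting the $n$ free constants $c_{1},\dots,c_{n}$ is routine.
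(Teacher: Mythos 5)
Your proposal is correct, but note that the paper offers no proof of this lemma at all: it is quoted verbatim as a known result from Kilbas--Srivastava--Trujillo, so there is no in-paper argument to compare against. Your route --- transporting the problem to the Riemann--Liouville setting via $s=\ln(t/a)$, using $\delta=t\,d/dt\leftrightarrow d/ds$ and $d\tau/\tau=d\sigma$ to get ${}^{H}I_{a^{+}}^{\,\mu}\leftrightarrow{}^{RL}I_{0^{+}}^{\,\mu}$ and ${}^{H}D_{a^{+}}^{\,\mu}\leftrightarrow{}^{RL}D_{0^{+}}^{\,\mu}$, and then invoking the classical description of the kernel of ${}^{RL}D_{0^{+}}^{\,\mu}$ --- is exactly the standard derivation and is sound; the direct verification you sketch (the power rule killing $(\ln(t/a))^{\mu-k}$ for $k=1,\dots,n$, plus ${}^{H}D_{a^{+}}^{\,\mu}{}^{H}I_{a^{+}}^{\,\mu}=\mathrm{Id}$) only shows the displayed family consists of solutions, so the transport argument is genuinely needed to show every solution has this form. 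The points you flag as requiring care are the right ones: the equivalence $t^{n-1}x^{(n-1)}\in AC$ iff $\phi^{(n-1)}\in AC$ under $t=ae^{s}$ (which holds on compact subintervals since $a>0$), and the fact that $y\in L[a,\infty)$ gives $g\in L$ because $1/\tau\leq 1/a$. One small looseness worth tightening if you write this up: the identity ${}^{RL}D_{0^{+}}^{\,\mu}{}^{RL}I_{0^{+}}^{\,\mu}=\mathrm{Id}$ by itself only certifies the particular solution; to conclude you should say explicitly that any two solutions differ by an element of the kernel, and that the kernel characterization is taken in the space where ${}^{RL}I_{0^{+}}^{\,n-\mu}\phi$ has absolutely continuous $(n-1)$st derivative.
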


In the following lemma we formulate the Green's function.

\begin{lem}\label{linearbvp}
For $\mu\in(2,\,3)$, $y\in L[a,\,b]$, the Hadamard fractional BVP
\begin{equation}\label{hlbvp}\begin{split}
&{}^{H}D_{a^{+}}^{\,\mu}x(t)+y(t)=0,\hspace{0.4cm}t\in(a,\,b),\\
&x(a)=a\,x'(a)=x(b)=0
\end{split}\end{equation}
has a solution
\begin{equation}\label{hlbvpsol}
x(t)=\int_{a}^{b}G(t,\tau)y(\tau)\frac{d\tau}{\tau},\hspace{0.4cm}t\in[a,\,b],
\end{equation}
where the Green's function $G$ is defined as
\begin{equation}\label{greens}
G(t,\tau)=\frac{1}{\Gamma(\mu)\left(\ln\frac{b}{a}\right)^{\mu-1}}\begin{cases}\left(\ln\frac{t}{a}\right)^{\mu-1}\left(\ln\frac{b}{\tau}\right)^{\mu-1}-\left(\ln\frac{t}{\tau}\right)^{\mu-1}\left(\ln\frac{b}{a}\right)^{\mu-1},\hspace{0.4cm}&a\leq\tau\leq t\leq b,\\
\left(\ln\frac{t}{a}\right)^{\mu-1}\left(\ln\frac{b}{\tau}\right)^{\mu-1},\hspace{0.4cm}&a\leq t\leq\tau\leq b.
\end{cases}
\end{equation}
\end{lem}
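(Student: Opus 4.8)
The plan is to start from the general solution of the Hadamard fractional equation supplied by the preceding lemma. Since $\mu\in(2,3)$ we have $n=3$, so
\[
x(t)=c_{1}\left(\ln\frac{t}{a}\right)^{\mu-1}+c_{2}\left(\ln\frac{t}{a}\right)^{\mu-2}+c_{3}\left(\ln\frac{t}{a}\right)^{\mu-3}-\frac{1}{\Gamma(\mu)}\int_{a}^{t}\left(\ln\frac{t}{\tau}\right)^{\mu-1}y(\tau)\frac{d\tau}{\tau},
\]
with $c_{1},c_{2},c_{3}\in\mathbb{R}$ to be pinned down by the three boundary conditions. The key idea is to impose the conditions one at a time, exploiting the distinct behaviours of the powers $\mu-1$, $\mu-2$, $\mu-3$ as $t\to a^{+}$.

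First I would use $x(a)=0$. As $t\to a^{+}$ the factor $\left(\ln\frac{t}{a}\right)^{\mu-3}$ diverges because $\mu-3\in(-1,0)$, whereas the other two boundary powers and the integral vanish; demanding boundedness at $t=a$ therefore forces $c_{3}=0$. Next I would differentiate, using $\frac{d}{dt}\left(\ln\frac{t}{a}\right)^{\alpha}=\frac{\alpha}{t}\left(\ln\frac{t}{a}\right)^{\alpha-1}$ and Leibniz's rule on the integral (the boundary contribution at $\tau=t$ vanishes since $\mu-1>0$), to obtain
\[
x'(t)=\frac{c_{1}(\mu-1)}{t}\left(\ln\frac{t}{a}\right)^{\mu-2}+\frac{c_{2}(\mu-2)}{t}\left(\ln\frac{t}{a}\right)^{\mu-3}-\frac{\mu-1}{\Gamma(\mu)\,t}\int_{a}^{t}\left(\ln\frac{t}{\tau}\right)^{\mu-2}y(\tau)\frac{d\tau}{\tau}.
\]
Here the surviving $\left(\ln\frac{t}{a}\right)^{\mu-3}$ term blows up as $t\to a^{+}$, so the condition $a\,x'(a)=0$ forces $c_{2}=0$.

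With $c_{2}=c_{3}=0$, the final condition $x(b)=0$ determines
\[
c_{1}=\frac{1}{\Gamma(\mu)\left(\ln\frac{b}{a}\right)^{\mu-1}}\int_{a}^{b}\left(\ln\frac{b}{\tau}\right)^{\mu-1}y(\tau)\frac{d\tau}{\tau}.
\]
Substituting this back into the reduced solution and splitting the integration range at $\tau=t$ (so that for $\tau>t$ only the $c_{1}$-integral contributes, while for $\tau\le t$ both integrals contribute) gathers the integrand into a single piecewise kernel $G(t,\tau)$, which one checks coincides branch-by-branch with \eqref{greens}.

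All the steps are elementary; the only points demanding care are the correct identification that it is precisely the negative power $\mu-3$ which produces the singularities that successively kill $c_{3}$ and then $c_{2}$, and the verification that the boundary term in the Leibniz differentiation of the fractional integral genuinely vanishes (it does exactly because $\mu-1>0$). I expect the main obstacle to be purely organizational: assembling the two surviving integrals into the piecewise Green's function and confirming that the algebra on the diagonal branch reproduces the factored form $\left(\ln\frac{t}{a}\right)^{\mu-1}\left(\ln\frac{b}{\tau}\right)^{\mu-1}-\left(\ln\frac{t}{\tau}\right)^{\mu-1}\left(\ln\frac{b}{a}\right)^{\mu-1}$.
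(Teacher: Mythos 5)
Your proposal is correct and follows essentially the same route as the paper: take the general solution from the preceding lemma with $n=3$, use the boundary conditions to deduce $c_{3}=0$, $c_{2}=0$ and the stated value of $c_{1}$, then split the $c_{1}$-integral at $\tau=t$ to recover the piecewise kernel \eqref{greens}. The paper simply asserts the values of the constants, whereas you supply the (correct) justification via the divergence of the $\left(\ln\frac{t}{a}\right)^{\mu-3}$ term in $x$ and in $x'$ as $t\to a^{+}$; this is a welcome elaboration, not a different method.
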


\begin{proof}
The Hadamard fractional differential equation \eqref{hlbvp} has a general solution
\begin{align*}
x(t)=\sum_{k=1}^{3}c_{k}\left(\ln\frac{t}{a}\right)^{\mu-k}-\frac{1}{\Gamma(\mu)}\int_{a}^{t}\left(\ln\frac{t}{\tau}\right)^{\mu-1}y(\tau)\frac{d\tau}{\tau},\hspace{0.4cm}t\in[a,\,b],
\end{align*}
where $c_{i}\in\mathbb{R}$, $i=1,2,3$. Now, using $x(a)=a\,x'(a)=x(b)=0$, we have
\begin{align*}
c_{3}=0,\,c_{2}=0,\,c_{1}=\frac{1}{\Gamma(\mu)\left(\ln\frac{b}{a}\right)^{\mu-1}}\int_{a}^{b}\left(\ln\frac{b}{\tau}\right)^{\mu-1}y(\tau)\frac{d\tau}{\tau}.
\end{align*}
So,
\begin{align*}
x(t)=\frac{1}{\Gamma(\mu)\left(\ln\frac{b}{a}\right)^{\mu-1}}\int_{a}^{b}\left(\ln\frac{t}{a}\right)^{\mu-1}\left(\ln\frac{b}{\tau}\right)^{\mu-1}y(\tau)\frac{d\tau}{\tau}-\frac{1}{\Gamma(\mu)}\int_{a}^{t}\left(\ln\frac{t}{\tau}\right)^{\mu-1}y(\tau)\frac{d\tau}{\tau},\hspace{0.4cm}t\in[a,\,b],
\end{align*}
which is equivalent to \eqref{hlbvpsol}.
\end{proof}

In the following lemma we provide some important properties of the Green's function \eqref{greens}.

\begin{lem}\label{properties}
For $t,\tau,s\in[a,\,b]$, we have
\begin{itemize}
\item[$(\mathbf{i})$] $G(t,\tau)\leq\frac{u(t)}{\Gamma(\mu)\left(\ln\frac{b}{a}\right)}$,
\item[$(\mathbf{ii})$] $G(t,\tau)\leq\frac{v(\tau)}{\Gamma(\mu)\left(\ln\frac{b}{a}\right)}$,
\item[$(\mathbf{iii})$] $G(t,\tau)\geq\frac{u(t)\,v(\tau)}{\Gamma(\mu)\left(\ln\frac{b}{a}\right)^{\mu+1}}$,
\item[$(\mathbf{iv})$] $G(t,\tau)\geq w(t)\,G(s,\tau)$,
\end{itemize}
where
\begin{align*}
u(t)=\left(\ln\frac{t}{a}\right)^{\mu-1}\left(\ln\frac{b}{t}\right),\hspace{0.4cm}v(t)=\left(\ln\frac{t}{a}\right)\left(\ln\frac{b}{t}\right)^{\mu-1},\hspace{0.4cm}w(t)=\frac{u(t)}{\left(\ln\frac{b}{a}\right)^{\mu}}.
\end{align*}
\end{lem}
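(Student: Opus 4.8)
The plan is to normalise the Green's function. Writing $L=\ln\frac{b}{a}$ and setting $x=\frac{\ln(t/a)}{L}$, $y=\frac{\ln(\tau/a)}{L}\in[0,1]$, the expression \eqref{greens} becomes $G(t,\tau)=\frac{L^{\mu-1}}{\Gamma(\mu)}\,\widetilde G(x,y)$, where $\widetilde G(x,y)=x^{\mu-1}(1-y)^{\mu-1}-(x-y)^{\mu-1}$ for $y\le x$ and $\widetilde G(x,y)=x^{\mu-1}(1-y)^{\mu-1}$ for $x\le y$; moreover $u(t)=L^{\mu}x^{\mu-1}(1-x)$, $v(\tau)=L^{\mu}y(1-y)^{\mu-1}$ and $w(t)=x^{\mu-1}(1-x)$. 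After cancelling the common factor $L^{\mu-1}/\Gamma(\mu)$, the four claims read: $(\mathbf{i})$ $\widetilde G\le x^{\mu-1}(1-x)$, $(\mathbf{ii})$ $\widetilde G\le y(1-y)^{\mu-1}$, $(\mathbf{iii})$ $\widetilde G\ge x^{\mu-1}(1-x)\,y(1-y)^{\mu-1}$, and $(\mathbf{iv})$ $\widetilde G(x,y)\ge x^{\mu-1}(1-x)\,\widetilde G(z,y)$. I would treat each inequality on the two branches $x\le y$ and $y\le x$ separately.

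Next I would dispose of the off-diagonal branch $x\le y$ (i.e. $t\le\tau$), where $\widetilde G=x^{\mu-1}(1-y)^{\mu-1}$ carries no subtracted term. Here all three of $(\mathbf{i})$--$(\mathbf{iii})$ collapse to one-line estimates built from the single fact that $2<\mu<3$ forces $\mu-1>1$, so $z^{\mu-1}\le z$ for $z\in[0,1]$, together with the monotonicities $x\le y\Rightarrow 1-y\le 1-x$ and $x^{\mu-1}\le x\le y$. Concretely, $(\mathbf{i})$ amounts to $(1-y)^{\mu-1}\le 1-y\le 1-x$, $(\mathbf{ii})$ to $x^{\mu-1}\le x\le y$, and $(\mathbf{iii})$ to $(1-x)y\le 1$.

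The substance lies in the on-diagonal branch $y\le x$, for which I would introduce the normalised variable $r=\frac{\ln(t/\tau)}{\ln(b/\tau)}=\frac{x-y}{1-y}\in[0,1]$, so that $x-y=r(1-y)$, $1-x=(1-r)(1-y)$ and $x=(1-r)y+r$. For $(\mathbf{ii})$, dividing by $(1-y)^{\mu-1}$ reduces the claim to $[(1-r)y+r]^{\mu-1}\le y+r^{\mu-1}$; here the difference $[(1-r)y+r]^{\mu-1}-y-r^{\mu-1}$, viewed as a function of $y$ for fixed $r$, is convex with endpoint values $0$ and $-r^{\mu-1}$ at $y=0,1$, so it lies below its chord and is $\le0$. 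For $(\mathbf{iii})$, writing $y=\frac{x-r}{1-r}$ and clearing denominators reduces the claim (for fixed $x$) to $\Xi(r):=(x^{\mu-1}-r^{\mu-1})(1-r)-x^{\mu-1}(1-x)(x-r)\ge0$ on $r\in[0,x]$; here $\Xi(x)=0$, and a computation gives $\Xi''(r)=(\mu-1)r^{\mu-3}\big(\mu r-(\mu-2)\big)$, which changes sign once from $-$ to $+$, so $\Xi'$ is quasi-convex and attains its maximum on $[0,x]$ at an endpoint. Since both $\Xi'(0)=-x^{\mu}$ and $\Xi'(x)=x^{\mu-2}(1-x)\big(x-(\mu-1)\big)$ are negative, $\Xi$ is strictly decreasing, whence $\Xi\ge\Xi(x)=0$.

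I expect $(\mathbf{i})$ on the branch $y\le x$ to be the main obstacle, because the relevant function $\Lambda(y)=x^{\mu-1}(1-x)-\widetilde G(x,y)$ is convex with nonnegative endpoint values but an interior minimum, so the inequality cannot be read off the boundary and the minimiser must be located explicitly. Solving $\Lambda'(y)=0$ gives $\frac{x-y}{1-y}=k$ with $k:=x^{(\mu-1)/(\mu-2)}$, and the key simplification is the identity $x^{\mu-1}-k^{\mu-1}=x^{\mu-1}(1-k)$ (which follows from $k^{\mu-1}=x^{(\mu-1)^2/(\mu-2)}$), yielding the closed form $\Lambda(y^\ast)=x^{\mu-1}(1-x)\big[1-(1-x)^{\mu-2}(1-k)^{2-\mu}\big]$. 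Since the exponent $(\mu-1)/(\mu-2)>1$ and $x\le1$ give $k\le x$, hence $1-k\ge1-x$ and $(1-k)^{2-\mu}\le(1-x)^{2-\mu}$ (as $2-\mu<0$), the bracket is nonnegative and $(\mathbf{i})$ follows. Finally $(\mathbf{iv})$ needs no new work: applying $(\mathbf{ii})$ at $s$ and $(\mathbf{iii})$ at $t$ gives $w(t)\,G(s,\tau)\le\frac{u(t)}{L^{\mu}}\cdot\frac{v(\tau)}{\Gamma(\mu)L}=\frac{u(t)v(\tau)}{\Gamma(\mu)L^{\mu+1}}\le G(t,\tau)$.
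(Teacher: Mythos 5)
Your proof is correct, and in parts $(\mathbf{ii})$ and $(\mathbf{iii})$ it takes a genuinely different route from the paper's. Both arguments share the same skeleton: split into the branches $\tau\le t$ and $t\le\tau$, dispose of the off-diagonal branch by elementary monotonicity, and deduce $(\mathbf{iv})$ by chaining $(\mathbf{ii})$ and $(\mathbf{iii})$. Your $(\mathbf{i})$ is in the same spirit as the paper's, which substitutes the stationarity relation $\frac{\partial G}{\partial\tau}=0$ back into $G$ and then estimates; your closed form $\Lambda(y^{\ast})=x^{\mu-1}(1-x)\bigl[1-(1-x)^{\mu-2}(1-k)^{2-\mu}\bigr]$ with $k=x^{(\mu-1)/(\mu-2)}$ is a sharper bookkeeping of that same critical-point computation, and it correctly identifies why convexity plus endpoint checks alone would not suffice there. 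For $(\mathbf{ii})$ the paper repeats the critical-point device in the $t$-variable, whereas you fix $r=\frac{x-y}{1-y}$ and use convexity of $y\mapsto[(1-r)y+r]^{\mu-1}-y-r^{\mu-1}$ together with the chord bound; this is cleaner and sidesteps having to justify that the maximizer is interior. The largest divergence is $(\mathbf{iii})$: the paper proceeds through a chain of term-by-term estimates and an auxiliary function $\theta$ whose monotonicity $\theta(t)\ge\theta(\tau)$ is asserted without proof, while you reduce to the one-variable inequality $\Xi(r)=(x^{\mu-1}-r^{\mu-1})(1-r)-x^{\mu-1}(1-x)(x-r)\ge0$ on $[0,x]$ and settle it by showing $\Xi'\le0$ from the sign pattern of $\Xi''$; this is fully verifiable and in effect repairs the least transparent step of the published argument. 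The only (cosmetic) caveat is that your divisions by $(1-y)^{\mu-1}$ and $(1-x)^{\mu-1}$ require $y<1$ and $x<1$; the excluded cases $\tau=b$ or $t=b$ follow by continuity or by direct evaluation of $G$ there.
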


\begin{proof}
$(\mathbf{i})$ For $a\leq\tau\leq t\leq b$, we have
\begin{align*}
G(t,\tau)=&\frac{1}{\Gamma(\mu)\left(\ln\frac{b}{a}\right)^{\mu-1}}\left(\left(\ln\frac{t}{a}\right)^{\mu-1}\left(\ln\frac{b}{\tau}\right)^{\mu-1}-\left(\ln\frac{t}{\tau}\right)^{\mu-1}\left(\ln\frac{b}{a}\right)^{\mu-1}\right)\\
=&\frac{1}{\Gamma(\mu)\left(\ln\frac{b}{a}\right)^{\mu-1}}\left(\left(\ln\frac{t}{a}\right)^{\mu-1}\left(\ln\frac{b}{\tau}\right)^{\mu-1}-\left(\ln\frac{t}{\tau}\right)^{\mu-2}\left(\ln\frac{t}{\tau}\right)\left(\ln\frac{b}{a}\right)^{\mu-1}\right).
\end{align*}
However, $G(t,\tau)$ is maximum along $\tau\in[a,\,b]$ satisfying $\frac{\partial G}{\partial\tau}(t,\tau)=0$, which implies that
\begin{align*}
\left(\ln\frac{t}{\tau}\right)^{\mu-2}\left(\ln\frac{b}{a}\right)^{\mu-1}=\left(\ln\frac{t}{a}\right)^{\mu-1}\left(\ln\frac{b}{\tau}\right)^{\mu-2}.
\end{align*}
Therefore,
\begin{align*}
G(t,\tau)\leq&\frac{1}{\Gamma(\mu)\left(\ln\frac{b}{a}\right)^{\mu-1}}\left(\left(\ln\frac{t}{a}\right)^{\mu-1}\left(\ln\frac{b}{\tau}\right)^{\mu-1}-\left(\ln\frac{t}{a}\right)^{\mu-1}\left(\ln\frac{b}{\tau}\right)^{\mu-2}\left(\ln\frac{t}{\tau}\right)\right)\\
=&\frac{1}{\Gamma(\mu)\left(\ln\frac{b}{a}\right)^{\mu-1}}\left(\left(\ln\frac{b}{\tau}\right)^{\mu-2}\left(\ln\frac{t}{a}\right)^{\mu-1}\left(\left(\ln\frac{b}{\tau}\right)-\left(\ln\frac{t}{\tau}\right)\right)\right)\\
\leq&\frac{1}{\Gamma(\mu)\left(\ln\frac{b}{a}\right)^{\mu-1}}\left(\left(\ln\frac{b}{a}\right)^{\mu-2}\left(\ln\frac{t}{a}\right)^{\mu-1}\left(\left(\ln\frac{b}{\tau}\right)-\left(\ln\frac{t}{\tau}\right)\right)\right)\\
\leq&\frac{1}{\Gamma(\mu)\left(\ln\frac{b}{a}\right)^{\mu-1}}\left(\ln\frac{b}{a}\right)^{\mu-2}\left(\ln\frac{t}{a}\right)^{\mu-1}\left(\ln\frac{b}{t}\right)\\
=&\frac{u(t)}{\Gamma(\mu)\left(\ln\frac{b}{a}\right)}.
\end{align*}
Also, for $a\leq t\leq\tau\leq b$, we have
\begin{align*}
G(t,\tau)=&\frac{1}{\Gamma(\mu)\left(\ln\frac{b}{a}\right)^{\mu-1}}\left(\ln\frac{t}{a}\right)^{\mu-1}\left(\ln\frac{b}{\tau}\right)^{\mu-1}\\
\leq&\frac{1}{\Gamma(\mu)\left(\ln\frac{b}{a}\right)^{\mu-1}}\left(\ln\frac{t}{a}\right)^{\mu-1}\left(\ln\frac{b}{\tau}\right)^{\mu-2}\left(\ln\frac{b}{\tau}\right)\\
\leq&\frac{1}{\Gamma(\mu)\left(\ln\frac{b}{a}\right)^{\mu-1}}\left(\ln\frac{b}{a}\right)^{\mu-2}\left(\ln\frac{t}{a}\right)^{\mu-1}\left(\ln\frac{b}{t}\right)\\
=&\frac{u(t)}{\Gamma(\mu)\left(\ln\frac{b}{a}\right)}.
\end{align*}
$(\mathbf{ii})$ Again, for $a\leq\tau\leq t\leq b$, we have
\begin{align*}
G(t,\tau)=&\frac{1}{\Gamma(\mu)\left(\ln\frac{b}{a}\right)^{\mu-1}}\left(\left(\ln\frac{t}{a}\right)^{\mu-1}\left(\ln\frac{b}{\tau}\right)^{\mu-1}-\left(\ln\frac{t}{\tau}\right)^{\mu-1}\left(\ln\frac{b}{a}\right)^{\mu-1}\right)\\
=&\frac{1}{\Gamma(\mu)\left(\ln\frac{b}{a}\right)^{\mu-1}}\left(\left(\ln\frac{t}{a}\right)^{\mu-1}\left(\ln\frac{b}{\tau}\right)^{\mu-1}-\left(\ln\frac{t}{\tau}\right)^{\mu-2}\left(\ln\frac{t}{\tau}\right)\left(\ln\frac{b}{a}\right)^{\mu-1}\right).
\end{align*}
However, $G(t,\tau)$ is maximum along $t\in[a,\,b]$ satisfying $\frac{\partial G}{\partial t}(t,\tau)=0$, which implies that
\begin{align*}
\left(\ln\frac{t}{\tau}\right)^{\mu-2}\left(\ln\frac{b}{a}\right)^{\mu-1}=\left(\ln\frac{t}{a}\right)^{\mu-2}\left(\ln\frac{b}{\tau}\right)^{\mu-1}.
\end{align*}
Therefore,
\begin{align*}
G(t,\tau)\leq&\frac{1}{\Gamma(\mu)\left(\ln\frac{b}{a}\right)^{\mu-1}}\left(\left(\ln\frac{t}{a}\right)^{\mu-1}\left(\ln\frac{b}{\tau}\right)^{\mu-1}-\left(\ln\frac{t}{a}\right)^{\mu-2}\left(\ln\frac{b}{\tau}\right)^{\mu-1}\left(\ln\frac{t}{\tau}\right)\right)\\
=&\frac{1}{\Gamma(\mu)\left(\ln\frac{b}{a}\right)^{\mu-1}}\left(\left(\ln\frac{t}{a}\right)^{\mu-2}\left(\ln\frac{b}{\tau}\right)^{\mu-1}\left(\left(\ln\frac{t}{a}\right)-\left(\ln\frac{t}{\tau}\right)\right)\right)\\
\leq&\frac{1}{\Gamma(\mu)\left(\ln\frac{b}{a}\right)^{\mu-1}}\left(\left(\ln\frac{b}{a}\right)^{\mu-2}\left(\ln\frac{b}{\tau}\right)^{\mu-1}\left(\ln\frac{\tau}{a}\right)\right)\\
\leq&\frac{1}{\Gamma(\mu)\left(\ln\frac{b}{a}\right)^{\mu-1}}\left(\ln\frac{b}{a}\right)^{\mu-2}\left(\ln\frac{\tau}{a}\right)\left(\ln\frac{b}{\tau}\right)^{\mu-1}\\
=&\frac{v(\tau)}{\Gamma(\mu)\left(\ln\frac{b}{a}\right)}.
\end{align*}
Also, for $a\leq t\leq\tau\leq b$, we have
\begin{align*}
G(t,\tau)=&\frac{1}{\Gamma(\mu)\left(\ln\frac{b}{a}\right)^{\mu-1}}\left(\ln\frac{t}{a}\right)^{\mu-1}\left(\ln\frac{b}{\tau}\right)^{\mu-1}\\
\leq&\frac{1}{\Gamma(\mu)\left(\ln\frac{b}{a}\right)^{\mu-1}}\left(\ln\frac{t}{a}\right)^{\mu-2}\left(\ln\frac{t}{a}\right)\left(\ln\frac{b}{\tau}\right)^{\mu-1}\\
\leq&\frac{1}{\Gamma(\mu)\left(\ln\frac{b}{a}\right)^{\mu-1}}\left(\ln\frac{b}{a}\right)^{\mu-2}\left(\ln\frac{\tau}{a}\right)\left(\ln\frac{b}{\tau}\right)^{\mu-1}\\
=&\frac{v(\tau)}{\Gamma(\mu)\left(\ln\frac{b}{a}\right)}.
\end{align*}
$(\mathbf{iii})$ For $a\leq\tau\leq t\leq b$, we have
\begin{align*}
G(t,\tau)=&\frac{1}{\Gamma(\mu)\left(\ln\frac{b}{a}\right)^{\mu-1}}\left(\left(\ln\frac{t}{a}\right)^{\mu-1}\left(\ln\frac{b}{\tau}\right)^{\mu-1}-\left(\ln\frac{t}{\tau}\right)^{\mu-1}\left(\ln\frac{b}{a}\right)^{\mu-1}\right)\\
=&\frac{1}{\Gamma(\mu)\left(\ln\frac{b}{a}\right)^{\mu+1}}\left(\left(\ln\frac{b}{a}\right)^{2}\left(\ln\frac{t}{a}\right)^{\mu-1}\left(\ln\frac{b}{\tau}\right)^{\mu-1}-\left(\ln\frac{b}{a}\right)^{2}\left(\ln\frac{t}{\tau}\right)^{\mu-1}\left(\ln\frac{b}{a}\right)^{\mu-1}\right)\\
\geq&\frac{1}{\Gamma(\mu)\left(\ln\frac{b}{a}\right)^{\mu+1}}\left(\left(\ln\frac{t}{a}\right)^{\mu}\left(\ln\frac{b}{\tau}\right)^{\mu}-\left(\ln\frac{b}{a}\right)^{\mu+1}\left(\ln\frac{t}{\tau}\right)^{\mu-1}\right)\\
=&\frac{1}{\Gamma(\mu)\left(\ln\frac{b}{a}\right)^{\mu+1}}\left(\ln\frac{t}{a}\right)^{\mu-1}\left(\ln\frac{b}{\tau}\right)^{\mu-1}\left(\ln\frac{b}{\tau}\right)\left(\left(\ln\frac{t}{a}\right)-\frac{\left(\ln\frac{b}{a}\right)^{\mu+1}\left(\ln\frac{t}{\tau}\right)^{\mu-1}}{\left(\ln\frac{b}{\tau}\right)\left(\ln\frac{t}{a}\right)^{\mu-1}\left(\ln\frac{b}{\tau}\right)^{\mu}}\right)\\
\geq&\frac{1}{\Gamma(\mu)\left(\ln\frac{b}{a}\right)^{\mu+1}}\left(\ln\frac{t}{a}\right)^{\mu-1}\left(\ln\frac{b}{\tau}\right)^{\mu-1}\left(\ln\frac{b}{t}\right)\left(\left(\ln\frac{t}{a}\right)-\frac{\left(\ln\frac{b}{a}\right)^{\mu+1}\left(\ln\frac{t}{\tau}\right)^{\mu-1}}{\left(\ln\frac{b}{\tau}\right)\left(\ln\frac{t}{a}\right)^{\mu-1}\left(\ln\frac{b}{\tau}\right)^{\mu}}\right).
\end{align*}
Now using $\theta(t):=\left(\ln\frac{t}{a}\right)-\frac{\left(\ln\frac{b}{a}\right)^{\mu+1}\left(\ln\frac{t}{\tau}\right)^{\mu-1}}{\left(\ln\frac{b}{\tau}\right)\left(\ln\frac{t}{a}\right)^{\mu-1}\left(\ln\frac{b}{\tau}\right)^{\mu}}\geq\theta(\tau)$, we have
\begin{align*}
G(t,\tau)\geq&\frac{1}{\Gamma(\mu)\left(\ln\frac{b}{a}\right)^{\mu+1}}\left(\ln\frac{t}{a}\right)^{\mu-1}\left(\ln\frac{b}{t}\right)\left(\ln\frac{\tau}{a}\right)\left(\ln\frac{b}{\tau}\right)^{\mu-1}\\
=&\frac{u(t)\,v(\tau)}{\Gamma(\mu)\left(\ln\frac{b}{a}\right)^{\mu+1}}.
\end{align*}
Also for $a\leq t\leq\tau\leq b$, we have
\begin{align*}
G(t,\tau)=&\frac{1}{\Gamma(\mu)\left(\ln\frac{b}{a}\right)^{\mu-1}}\left(\ln\frac{t}{a}\right)^{\mu-1}\left(\ln\frac{b}{\tau}\right)^{\mu-1}\\
=&\frac{1}{\Gamma(\mu)\left(\ln\frac{b}{a}\right)^{\mu-1}\left(\ln\frac{b}{t}\right)\left(\ln\frac{\tau}{a}\right)}\left(\ln\frac{t}{a}\right)^{\mu-1}\left(\ln\frac{b}{t}\right)\left(\ln\frac{\tau}{a}\right)\left(\ln\frac{b}{\tau}\right)^{\mu-1}\\
\geq&\frac{1}{\Gamma(\mu)\left(\ln\frac{b}{a}\right)^{\mu-1}\left(\ln\frac{b}{a}\right)^{2}}\left(\ln\frac{t}{a}\right)^{\mu-1}\left(\ln\frac{b}{t}\right)\left(\ln\frac{\tau}{a}\right)\left(\ln\frac{b}{\tau}\right)^{\mu-1}\\
=&\frac{u(t)\,v(\tau)}{\Gamma(\mu)\left(\ln\frac{b}{a}\right)^{\mu+1}}.
\end{align*}
$(\mathbf{iv})$ For $t,\tau,s\in[a,\,b]$, using $G(s,\tau)\leq\frac{v(\tau)}{\Gamma(\mu)\left(\ln\frac{b}{a}\right)}$, we have
\begin{align*}
G(t,\tau)\geq\frac{u(t)\,v(\tau)}{\Gamma(\mu)\left(\ln\frac{b}{a}\right)^{\mu+1}}\geq\frac{u(t)}{\left(\ln\frac{b}{a}\right)^{\mu}}G(s,\tau)=w(t)\,G(s,\tau).
\end{align*}
\end{proof}

For $x\in C[a,\,b]$, we write $\|x\|=\max_{t\in[a,\,b]}|x(t)|$. Clearly, $(C[a,\,b],\|\cdot\|)$ is a Banach space. For $r>0$, $\Omega_{r}:=\{x\in C[a,\,b]:\|x\|<r\}$ is a bounded and open subset of $C[a,\,b]$. Moreover, $P:=\left\{x\in C[a,\,b]:x(t)\geq w(t)\,\|x\|\text{ for all }t\in[a,\,b]\right\}$ is a cone of $C[a,\,b]$.

Throughout this article, assume that the following holds:

\begin{itemize}
\item[$(\mathbf{A}_{1})$] $\int_{a}^{b}f\left(t,c\,w(t)\right)dt<\infty$ for $c>0$.
\item[$(\mathbf{A}_{2})$] $f\in C((a,\,b)\times(0,\infty),(0,\infty))$ and there exist $(\vartheta,\rho)\in(a,\,b)\times(0,\infty)$ such that
    \begin{align*}
    f(\vartheta,\rho)\leq f(t,\rho)\leq f(t,x),\hspace{0.4cm}(t,x)\in(a,\,b)\times(0,\infty).
    \end{align*}
    Moreover, for each $t\in(a,\,b)$, $f(t,\cdot)$ is decreasing on $(0,\,\rho)$.
\item[$(\mathbf{A}_{3})$]
\begin{align*}
\sup_{\kappa\in(0,\,\rho)}\,\frac{\kappa}{\int_{a}^{b}v(t)\,f\left(t,\kappa\,w(t)\right)\frac{dt}{t}}>\frac{1}{\Gamma(\mu)\left(\ln\frac{b}{a}\right)}.
\end{align*}
\end{itemize}
In view of $(\mathbf{A}_{3})$, there exist $R>0$ such that
\begin{align*}
\frac{R}{\int_{a}^{b}v(t)\,f\left(t,R\,w(t)\right)\frac{dt}{t}}>\frac{1}{\Gamma(\mu)\left(\ln\frac{b}{a}\right)}.
\end{align*}
So, we can choose $\varepsilon>0$ such that
\begin{equation}\label{eps}
\frac{R-\varepsilon}{\int_{a}^{b}v(t)\,f\left(t,(R-\varepsilon)\,w(t)\right)\frac{dt}{t}}\geq\frac{1}{\Gamma(\mu)\left(\ln\frac{b}{a}\right)}.
\end{equation}
Choose $n_{0}\in\mathbb{N}$ such that $\frac{1}{n_{0}}<\varepsilon$. For $n\in\{n_{0},n_{0}+1,n_{0}+2,\cdots\}$, define a map $T_{n}:P\rightarrow P$ as
\begin{equation}\label{maptn}
(T_{n}x)(t)=\int_{a}^{b}G(t,\tau)f\left(\tau,|x(\tau)|+\frac{1}{n}\right)\frac{d\tau}{\tau},\hspace{0.4cm}t\in[a,\,b].
\end{equation}

\begin{lem}
The map $T_{n}:\overline{\Omega}_{R-\varepsilon}\cap P\rightarrow P$ is completely continuous.
\end{lem}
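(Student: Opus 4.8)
The plan is to verify the three ingredients of complete continuity in turn: that $T_n$ sends $\overline{\Omega}_{R-\varepsilon}\cap P$ into $P$, that its image is relatively compact, and that it is continuous. The whole argument rests on producing a single $L^1$-majorant for the family of integrands $\{f(\cdot,|x(\cdot)|+\tfrac1n):x\in\overline{\Omega}_{R-\varepsilon}\cap P\}$ that does not depend on $x$; once this is in hand, finiteness, continuity in $t$, the uniform bound, equicontinuity and sequential continuity all follow by routine dominated-convergence and Arzel\`a--Ascoli arguments.

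The crux, and the step I expect to be the \textbf{main obstacle}, is exactly this uniform majorization, because it is where the triple singularity of $f$ (at $t=a$, $t=b$ and $x=0$) must be absorbed. The two structural facts I would exploit are that $R<\rho$ (the supremum in $(\mathbf{A}_3)$ is taken over $\kappa\in(0,\rho)$, so the chosen $R$ lies in $(0,\rho)$) and that the regularization by $\tfrac1n$ keeps the spatial argument away from $0$. Concretely, for $x\in\overline{\Omega}_{R-\varepsilon}\cap P$ and $n\ge n_0$ one has $\tfrac1n\le |x(\tau)|+\tfrac1n\le (R-\varepsilon)+\tfrac1n<R<\rho$, so the argument never leaves the interval $(0,\rho)$ on which $f(\tau,\cdot)$ is decreasing by $(\mathbf{A}_2)$. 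Choosing $c>0$ with $c\max_{[a,b]}w\le\tfrac1n$ gives $c\,w(\tau)\le\tfrac1n\le|x(\tau)|+\tfrac1n$, and monotonicity yields the pointwise, $x$-free bound $f(\tau,|x(\tau)|+\tfrac1n)\le f(\tau,c\,w(\tau))=:g(\tau)$. Since $v(\tau)/\tau$ is bounded on $[a,b]$, assumption $(\mathbf{A}_1)$ guarantees $\int_a^b v(\tau)g(\tau)\tfrac{d\tau}{\tau}<\infty$ and $\int_a^b g(\tau)\tfrac{d\tau}{\tau}<\infty$.

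With $g$ fixed, I would first check that $T_n$ is well defined and maps into $P$. Property $(\mathbf{ii})$ of Lemma \ref{properties} gives $(T_nx)(t)\le\frac{1}{\Gamma(\mu)\ln(b/a)}\int_a^b v(\tau)g(\tau)\tfrac{d\tau}{\tau}<\infty$, so $T_nx$ is finite; continuity of $t\mapsto G(t,\tau)$ together with the majorant $\tfrac{v(\tau)}{\Gamma(\mu)\ln(b/a)}g(\tau)$ and dominated convergence give $T_nx\in C[a,b]$. Membership in $P$ comes from property $(\mathbf{iv})$: since $G(t,\tau)\ge w(t)G(s,\tau)$ for all $s$, integrating yields $(T_nx)(t)\ge w(t)(T_nx)(s)$ for every $s$, whence $(T_nx)(t)\ge w(t)\|T_nx\|$. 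The same majorant gives the uniform bound $\|T_nx\|\le K_n:=\frac{1}{\Gamma(\mu)\ln(b/a)}\int_a^b v(\tau)g(\tau)\tfrac{d\tau}{\tau}$, independent of $x$.

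For relative compactness I would invoke Arzel\`a--Ascoli. Boundedness is the estimate just displayed. For equicontinuity, note that $G$ is continuous, hence uniformly continuous, on the compact square $[a,b]^2$ (the two branches of \eqref{greens} agree on the diagonal and $G$ vanishes at $t=a,b$), so with modulus of continuity $\omega$ one has, uniformly in $x$, $|(T_nx)(t_1)-(T_nx)(t_2)|\le\omega(|t_1-t_2|)\int_a^b g(\tau)\tfrac{d\tau}{\tau}\to0$. Finally, continuity of $T_n$ follows again from dominated convergence: if $x_k\to x$ uniformly in $\overline{\Omega}_{R-\varepsilon}\cap P$, then $f(\tau,|x_k(\tau)|+\tfrac1n)\to f(\tau,|x(\tau)|+\tfrac1n)$ pointwise by continuity of $f$, all dominated by $g$, so $\|T_nx_k-T_nx\|\le\frac{1}{\Gamma(\mu)\ln(b/a)}\int_a^b v(\tau)\,|f(\tau,|x_k(\tau)|+\tfrac1n)-f(\tau,|x(\tau)|+\tfrac1n)|\,\tfrac{d\tau}{\tau}\to0$.
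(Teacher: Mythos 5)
Your proposal is correct, and it is in fact substantially more complete than the paper's own proof. The paper verifies only the cone invariance $T_{n}P\subset P$ (exactly as you do, by integrating the inequality of Lemma \ref{properties} $(\mathbf{iv})$ against the integrand and taking the supremum over $s$) and then simply asserts that $T_{n}$ is ``continuous and compact'' with no argument. The content you supply for that asserted step --- the uniform $L^{1}$-majorant $g(\tau)=f(\tau,c\,w(\tau))$ obtained by combining $1/n\leq|x(\tau)|+1/n\leq R-\varepsilon+1/n<R<\rho$ with the monotonicity of $f(\tau,\cdot)$ on $(0,\rho)$ from $(\mathbf{A}_{2})$ and the integrability from $(\mathbf{A}_{1})$ --- is precisely the point where the singularities of $f$ at $t=a$, $t=b$, $x=0$ must be controlled, and it is the right mechanism: once the $x$-independent dominating function is in place, well-definedness, the uniform bound via Lemma \ref{properties} $(\mathbf{ii})$, equicontinuity via the uniform continuity of $G$ on $[a,b]^{2}$, and sequential continuity via dominated convergence all follow as you describe. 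Two minor points worth making explicit if you write this up: the inequality $R<\rho$ should be recorded when $R$ is selected from $(\mathbf{A}_{3})$ (the supremum there is over $\kappa\in(0,\rho)$, so this is implicit but not stated), and the comparison $f(\tau,|x(\tau)|+1/n)\leq f(\tau,c\,w(\tau))$ only makes sense for $\tau$ in the open interval $(a,b)$, where $w(\tau)>0$; since the endpoints form a null set this does not affect any of the integrals.
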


\begin{proof}
For $x\in P$, $t\in[a,\,b]$, from \eqref{maptn}, using Lemma \ref{properties} $(\mathbf{iv})$, we have
\begin{align*}
(T_{n}x)(t)=&\int_{a}^{b}G(t,\tau)f\left(\tau,|x(\tau)|+\frac{1}{n}\right)\frac{d\tau}{\tau}\\
\geq&w(t)\int_{a}^{b}G(s,\tau)f\left(\tau,|x(\tau)|+\frac{1}{n}\right)\frac{d\tau}{\tau},\hspace{0.4cm}s\in[a,\,b]\\
=&w(t)\,(T_{n}x)(s),\hspace{4.55cm}s\in[a,\,b]
\end{align*}
which implies that
\begin{align*}
(T_{n}x)(t)\geq w(t)\,\|T_{n}x\|,\hspace{0.4cm}t\in[a,\,b],
\end{align*}
which implies that $T_{n}P\subset P$. Moreover, $T_{n}:\overline{\Omega}_{R-\varepsilon}\cap P\rightarrow P$ is continuous and compact.
\end{proof}
Also, we need the following fixed point index result \cite{guo} for our main theorem.
\begin{lem}\label{lemindexone}\cite{guo}
Assume that $T:\overline{\Omega}_{r}\cap P\rightarrow P$ is a completely continuous map such that $\|Tx\|\leq\|x\|$ for $x\in\partial\Omega_{r}\cap P$. Then, the fixed point index $\ind(T,\overline{\Omega}_{r}\cap P,P)=1$.
\end{lem}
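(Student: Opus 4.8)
The plan is to derive this statement from the two fundamental properties of the fixed point index on a cone---homotopy invariance and normalization---by deforming $T$ to the zero map. First I would record the normalization fact: since $\Omega_{r}=\{x\in C[a,\,b]:\|x\|<r\}$ is the open ball of radius $r$ about the origin, we have $0\in\Omega_{r}\cap P$, and the constant map sending everything to $0\in P$ has fixed point index $\ind(0,\overline{\Omega}_{r}\cap P,P)=1$ by the normalization property.

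Next I would introduce the homotopy $H:[0,1]\times(\overline{\Omega}_{r}\cap P)\to P$ defined by $H(\lambda,x)=\lambda\,Tx$. Because $T$ is completely continuous and $P$ is a convex cone containing $0$, each $\lambda\,Tx$ lies in $P$ and $H$ is completely continuous in $(\lambda,x)$; thus $H$ is a candidate admissible homotopy joining the zero map ($\lambda=0$) to $T$ ($\lambda=1$), provided it is fixed-point-free on the boundary.

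The key step, and the only point requiring the hypothesis, is to verify that $H(\lambda,x)\neq x$ for every $x\in\partial\Omega_{r}\cap P$ and every $\lambda\in[0,1]$. Suppose to the contrary that $x=\lambda\,Tx$ for some such pair. If $\lambda=0$ then $x=0$, contradicting $\|x\|=r>0$. If $\lambda\in(0,1)$, then taking norms and using $\|Tx\|\leq\|x\|$ on the boundary gives $r=\|x\|=\lambda\,\|Tx\|\leq\lambda\,\|x\|=\lambda r<r$, a contradiction. The remaining value $\lambda=1$ would make $x$ a fixed point of $T$ on $\partial\Omega_{r}\cap P$; this is excluded precisely because the index $\ind(T,\overline{\Omega}_{r}\cap P,P)$ is assumed to be defined, i.e.\ $T$ is fixed-point-free on the boundary. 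Hence $H$ is admissible, and by homotopy invariance together with the normalization computed above,
\begin{align*}
\ind(T,\overline{\Omega}_{r}\cap P,P)=\ind(H(1,\cdot),\overline{\Omega}_{r}\cap P,P)=\ind(H(0,\cdot),\overline{\Omega}_{r}\cap P,P)=\ind(0,\overline{\Omega}_{r}\cap P,P)=1.
\end{align*}

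The main obstacle I anticipate is the borderline case $\lambda=1$: the norm inequality $\|Tx\|\leq\|x\|$ by itself only rules out $x=\lambda\,Tx$ for $\lambda\in[0,1)$ and does not exclude a genuine fixed point on the boundary. This is why the statement must be read with the standing assumption that $T$ has no fixed point on $\partial\Omega_{r}\cap P$, without which the index is not even defined; under the slightly stronger and more commonly quoted hypothesis that $Tx\neq\mu\,x$ for all $x\in\partial\Omega_{r}\cap P$ and all $\mu\geq1$, the case $\lambda=1$ is eliminated automatically and the homotopy argument goes through verbatim.
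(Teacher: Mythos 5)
The paper does not prove this lemma at all: it is quoted verbatim from the reference \cite{guo} (Guo--Lakshmikantham) and used as a black box, so there is no in-paper argument to compare against. Your homotopy proof is the standard one for this cited result and it is correct: normalization gives $\ind(0,\overline{\Omega}_{r}\cap P,P)=1$ because $0\in\Omega_{r}\cap P$, the deformation $H(\lambda,x)=\lambda\,Tx$ stays in $P$ since $P$ is a cone, and the norm hypothesis kills boundary fixed points for all $\lambda\in[0,1)$ via $r=\lambda\|Tx\|\leq\lambda r<r$. You are also right to flag the only delicate point, namely $\lambda=1$: the inequality $\|Tx\|\leq\|x\|$ alone does not preclude a fixed point of $T$ on $\partial\Omega_{r}\cap P$, so the lemma must be read either with the standing convention that the index is defined (no boundary fixed points) or with the stronger hypothesis $Tx\neq\mu x$ for $\mu\geq1$ on the boundary, which is how the source states it. In the application in Theorem \ref{mainth} this caveat is harmless, since a fixed point of $T_{n}$ on $\partial\Omega_{R-\varepsilon}\cap P$ would serve the author's purpose just as well as one in the interior. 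No gap beyond the one you already identified and resolved.
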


\section{Main Result}\label{main}

\begin{thm}\label{mainth}
Assume that $(\mathbf{A}_{1})-(\mathbf{A}_{3})$ hold. Then the Hadamard fractional SBVP \eqref{hbvp} has a positive solution.
\end{thm}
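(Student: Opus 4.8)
The plan is to solve a family of regularized (non-singular) problems via the fixed point index, extract a uniformly convergent subsequence of the approximate solutions by Arzel\`a--Ascoli, and pass to the limit to recover a positive solution of \eqref{hbvp}. First I would fix $n\geq n_{0}$ and apply Lemma \ref{lemindexone} to $T_{n}$ on $\overline{\Omega}_{R-\varepsilon}\cap P$. For $x\in\partial\Omega_{R-\varepsilon}\cap P$ one has $\|x\|=R-\varepsilon$, so $x(\tau)=|x(\tau)|\geq(R-\varepsilon)w(\tau)$, while $|x(\tau)|+\tfrac1n<(R-\varepsilon)+\varepsilon=R<\rho$; since $w\leq1$, both $|x(\tau)|+\tfrac1n$ and $(R-\varepsilon)w(\tau)$ lie in $(0,\rho)$, and the monotonicity in $(\mathbf{A}_{2})$ gives $f(\tau,|x(\tau)|+\tfrac1n)\leq f(\tau,(R-\varepsilon)w(\tau))$. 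Combining this with Lemma \ref{properties}$(\mathbf{ii})$ and \eqref{eps} yields
\begin{align*}
(T_{n}x)(t)\leq\frac{1}{\Gamma(\mu)\left(\ln\frac{b}{a}\right)}\int_{a}^{b}v(\tau)f\left(\tau,(R-\varepsilon)w(\tau)\right)\frac{d\tau}{\tau}\leq R-\varepsilon=\|x\|,
\end{align*}
so $\|T_{n}x\|\leq\|x\|$ on $\partial\Omega_{R-\varepsilon}\cap P$ and hence $\ind(T_{n},\overline{\Omega}_{R-\varepsilon}\cap P,P)=1$. By the solution property of the index, $T_{n}$ has a fixed point $x_{n}\in\overline{\Omega}_{R-\varepsilon}\cap P$ satisfying
\begin{align*}
x_{n}(t)=\int_{a}^{b}G(t,\tau)f\left(\tau,x_{n}(\tau)+\tfrac1n\right)\frac{d\tau}{\tau},\hspace{0.4cm}\|x_{n}\|\leq R-\varepsilon.
\end{align*}

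Next I would extract $n$-independent estimates. From $(\mathbf{A}_{2})$ the global lower bound $f(t,x)\geq f(\vartheta,\rho)=:m>0$ holds on all of $(a,\,b)\times(0,\infty)$, so Lemma \ref{properties}$(\mathbf{iii})$ gives, with $\sigma:=\frac{m}{\Gamma(\mu)(\ln\frac{b}{a})^{\mu+1}}\int_{a}^{b}v(\tau)\frac{d\tau}{\tau}>0$, the uniform lower estimate $x_{n}(t)\geq\sigma\,u(t)$ for all $n$ and all $t\in[a,\,b]$. Since moreover $\sigma u(\tau)\leq x_{n}(\tau)<\rho$, the monotonicity in $(\mathbf{A}_{2})$ produces the $n$-independent domination $f(\tau,x_{n}(\tau)+\tfrac1n)\leq f(\tau,\sigma u(\tau))=f(\tau,c\,w(\tau))$ with $c:=\sigma(\ln\frac{b}{a})^{\mu}$, which is integrable by $(\mathbf{A}_{1})$. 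This is precisely the integrable majorant that controls the singularity at $x=0$ uniformly in $n$.

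I would then invoke Arzel\`a--Ascoli. The family $\{x_{n}\}$ is uniformly bounded by $R-\varepsilon$, and equicontinuity follows from
\begin{align*}
|x_{n}(t_{1})-x_{n}(t_{2})|\leq\int_{a}^{b}|G(t_{1},\tau)-G(t_{2},\tau)|\,f\left(\tau,c\,w(\tau)\right)\frac{d\tau}{\tau},
\end{align*}
the continuity of $t\mapsto G(t,\tau)$, and absolute continuity of the integral against the integrable majorant. Passing to a subsequence, $x_{n_{k}}\to x^{\ast}$ in $C[a,\,b]$ with $x^{\ast}\in P$ (the cone being closed), $\|x^{\ast}\|\leq R-\varepsilon<\rho$, and $x^{\ast}(t)\geq\sigma u(t)>0$ for $t\in(a,\,b)$. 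Since $x_{n_{k}}(\tau)+\tfrac{1}{n_{k}}\to x^{\ast}(\tau)$ and $f$ is continuous on $(a,\,b)\times(0,\infty)$, dominated convergence (with majorant $G(t,\tau)f(\tau,c\,w(\tau))/\tau$) gives
\begin{align*}
x^{\ast}(t)=\int_{a}^{b}G(t,\tau)f\left(\tau,x^{\ast}(\tau)\right)\frac{d\tau}{\tau},\hspace{0.4cm}t\in[a,\,b].
\end{align*}
Because $f(\tau,x^{\ast}(\tau))\leq f(\tau,c\,w(\tau))$ shows $y:=f(\cdot,x^{\ast}(\cdot))\in L[a,\,b]$, Lemma \ref{linearbvp} identifies $x^{\ast}$ as a solution of \eqref{hbvp}, and with $x^{\ast}(t)>0$ on $(a,\,b)$ this is the required positive solution.

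I expect the main obstacle to be the compactness and limit step: establishing equicontinuity of $\{x_{n}\}$ up to the singular endpoints $t=a$, $t=b$ and justifying the interchange of limit and integral in the presence of the singularities at $t=a$, $t=b$, and $x=0$. The resolution is the uniform majorant $f(\tau,c\,w(\tau))$ furnished by $(\mathbf{A}_{1})$, which is available only because the cone lower bound $x_{n}\geq\sigma u$ (from the global lower bound on $f$ in $(\mathbf{A}_{2})$ together with Lemma \ref{properties}$(\mathbf{iii})$) keeps the arguments of $f$ bounded away from $0$ uniformly in $n$ and inside the decreasing regime $(0,\,\rho)$, where the monotonicity in $(\mathbf{A}_{2})$ converts the lower bound into the desired upper domination.
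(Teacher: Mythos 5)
Your proposal is correct and follows essentially the same route as the paper: the index computation for $T_{n}$ on $\overline{\Omega}_{R-\varepsilon}\cap P$ via Lemma \ref{lemindexone}, the uniform lower bound $x_{n}\geq r\,w$ from $(\mathbf{A}_{2})$ and Lemma \ref{properties}$(\mathbf{iii})$, Arzel\`a--Ascoli, and passage to the limit by dominated convergence. In fact you are slightly more careful than the paper at the equicontinuity and limit steps, where you make explicit the $n$-independent integrable majorant $f(\cdot,c\,w(\cdot))$ coming from $(\mathbf{A}_{1})$ and the monotonicity in $(\mathbf{A}_{2})$, which the paper uses only implicitly.
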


\begin{proof}
For $x\in\partial\Omega_{R-\varepsilon}\cap P$, we have $(R-\varepsilon)\,w(t)\leq x(t)\leq R-\varepsilon$ for $t\in[a,\,b]$. Therefore, in view of \eqref{maptn}, using Lemma \ref{properties} $(\mathbf{ii})$ and \eqref{eps}, for $t\in[a,\,b]$, we have
\begin{align*}
(T_{n}x)(t)=&\int_{a}^{b}G(t,\tau)f\left(\tau,|x(\tau)|+\frac{1}{n}\right)\frac{d\tau}{\tau}\\
\leq&\frac{1}{\Gamma(\mu)\left(\ln\frac{b}{a}\right)}\int_{a}^{b}v(\tau)f\left(\tau,|x(\tau)|+\frac{1}{n}\right)\frac{d\tau}{\tau}\\
\leq&\frac{1}{\Gamma(\mu)\left(\ln\frac{b}{a}\right)}\int_{a}^{b}v(\tau)f\left(\tau,(R-\varepsilon)w(\tau)\right)\frac{d\tau}{\tau}\\
\leq&R-\varepsilon,
\end{align*}
which implies that
\begin{align*}
\|T_{n}x\|\leq\|x\|,\hspace{0.4cm}x\in\partial\Omega_{R-\varepsilon}\cap P,
\end{align*}
which in view of Lemma \ref{lemindexone}, leads to
\begin{align*}
\ind(T_{n},\overline{\Omega}_{R-\varepsilon}\cap P,P)=1.
\end{align*}
So, there exist $x_{n}\in\Omega_{R-\varepsilon}\cap P$ such that $T_{n}x_{n}=x_{n}$. Moreover, using \eqref{maptn}, Lemma \ref{properties} $(\mathbf{iii})$, we have
\begin{align*}
x_{n}(t)=&\int_{a}^{b}G(t,\tau)f\left(\tau,|x_{n}(\tau)|+\frac{1}{n}\right)\frac{d\tau}{\tau}\\
\geq&\frac{u(t)}{\Gamma(\mu)\left(\ln\frac{b}{a}\right)^{\mu+1}}\int_{a}^{b}v(\tau)f\left(\vartheta,\rho\right)\frac{d\tau}{\tau}\\
=&\frac{w(t)f(\vartheta,\rho)}{\Gamma(\mu+2)}\left(\ln\frac{b}{a}\right)^{\mu}\\
=&r\,w(t),\text{ where }r=\frac{f(\vartheta,\rho)}{\Gamma(\mu+2)}\left(\ln\frac{b}{a}\right)^{\mu}.
\end{align*}
Consequently, $x_{n}\in P$ satisfy
\begin{equation}\label{xnt}x_{n}(t)=\int_{a}^{b}G(t,\tau)f\left(\tau,x_{n}(\tau)+\frac{1}{n}\right)\frac{d\tau}{\tau},\hspace{0.4cm}t\in[a,\,b],\end{equation}
and
\begin{align*}
r\,w(t)\leq x_{n}(t)\leq R-\varepsilon,\hspace{0.4cm}t\in[a,\,b],
\end{align*}
which shows that the sequence $\{x_{n}\}_{n=n_{0}}^{\infty}$ is uniformly bounded on $[a,\,b]$. Moreover, since the Green's function \eqref{greens} is uniformly continuous on $[a,\,b]\times[a,\,b]$, the sequence $\{x_{n}\}_{n=n_{0}}^{\infty}$ is equicontinuous on $[a,\,b]$. Thus by Arzel\`{a}-Ascoli theorem the sequence $\{x_{n}\}_{n=n_{0}}^{\infty}$ is relatively compact and consequently there exist a subsequence $\{x_{n_{k}}\}_{k=1}^{\infty}$ converging uniformly to $x\in C[a,\,b]$. Moreover, in view of \eqref{xnt}, we have
\begin{align*}
x_{n_{k}}(t)=\int_{a}^{b}G(t,\tau)f\left(\tau,x_{n_{k}}(\tau)+\frac{1}{n_{k}}\right)\frac{d\tau}{\tau},
\end{align*}
as $k\rightarrow\infty$, in view of the Lebesgue dominated convergence theorem, we obtain
\begin{equation}\label{intsol}
x(t)=\int_{a}^{b}G(t,\tau)f(\tau,x(\tau))\frac{d\tau}{\tau},
\end{equation}
which in view of Lemma \ref{linearbvp}, leads to
\begin{align*}
{}^{H}D_{a^{+}}^{\mu}x(t)+f(t,x(t))&=0,\hspace{0.4cm}t\in(a,\,b)\\
x(a)=a\,x'(a)=x(b)&=0.
\end{align*}
Also, ${}^{H}D_{a^{+}}^{\mu}x\in C(a,\,b)$. Further, from \eqref{intsol} in view of $(\mathbf{A}_{2})$ and using Lemma \ref{properties} $(\mathbf{iii})$, we have
\begin{align*}
x(t)=\int_{a}^{b}G(t,\tau)f(\tau,x(\tau))\frac{d\tau}{\tau}\geq r\,w(t),\hspace{0.4cm}t\in[a,\,b],
\end{align*}
which shows that $x(t)>0$ for $t\in(a,\,b)$. Hence $x\in C[a,\,b]$ with ${}^{H}D_{a^{+}}^{\mu}x\in C(a,\,b)$ is a positive solution of the Hadamard fractional SBVP \eqref{hbvp}.
\end{proof}

\begin{ex}
\begin{equation}\label{ebvp}\begin{split}
{}^{H}D_{1^{+}}^{\,^{2.9}}\,x(t)+\frac{\lambda}{\sqrt[4]{\left(\ln t\right)^{1.9}\left(\ln\frac{e}{t}\right)}}\left(\sqrt{x(t)}+\frac{2}{\sqrt[4]{x(t)}}\right)&=0,\hspace{0.4cm}t\in(1,\,e),\\
x(1)=x'(1)=x(e)&=0,
\end{split}\end{equation}
where
\begin{align*}
0<\lambda<\sup_{\kappa\in(0,\,1)}\,\frac{\kappa\,\Gamma(2.9)}{\int_{1}^{e}\sqrt[4]{\left(\ln t\right)^{2.1}\left(\ln\frac{e}{t}\right)^{6.6}}\left(\sqrt{\kappa\,\left(\ln t\right)^{1.9}\left(\ln\frac{e}{t}\right)}+\frac{2}{\sqrt[4]{\kappa\,\left(\ln t\right)^{1.9}\left(\ln\frac{e}{t}\right)}}\right)\frac{dt}{t}}.
\end{align*}
Here
\begin{align*}
f(t,x)=\frac{\lambda}{\sqrt[4]{\left(\ln t\right)^{1.9}\left(\ln\frac{e}{t}\right)}}\left(\sqrt{x(t)}+\frac{2}{\sqrt[4]{x(t)}}\right)
\end{align*}
Clearly, $f:(1,\,e)\times(0,\infty)\rightarrow(0,\infty)$ is continuous and singular at $t=1$, $t=e$ and $x=0$. Further, for each $t\in(1,\,e)$, $f(t,\cdot)$ is decreasing on $(0,\,1)$, and
\begin{align*}
f(e^{\frac{19}{29}},1)\leq f(t,1)\leq f(t,x),\hspace{0.4cm}(t,x)\in(1,\,e)\times(0,\infty).
\end{align*}
Moreover,
\begin{align*}
\int_{1}^{e}f(t,c\,w(t))dt=\int_{1}^{e}f\left(t,c\,\left(\ln t\right)^{1.9}\left(\ln\frac{e}{t}\right)\right)dt<\infty\text{ for }c>0,
\end{align*}
\begin{align*}
\sup_{\kappa\in(0,\,1)}&\,\frac{\kappa}{\int_{1}^{e}v(t)\,f\left(t,\kappa\,w(t)\right)\frac{dt}{t}}\\
=\sup_{\kappa\in(0,\,1)}&\,\frac{\kappa}{\lambda\int_{1}^{e}\sqrt[4]{\left(\ln t\right)^{2.1}\left(\ln\frac{e}{t}\right)^{6.6}}\left(\sqrt{\kappa\,\left(\ln t\right)^{1.9}\left(\ln\frac{e}{t}\right)}+\frac{2}{\sqrt[4]{\kappa\,\left(\ln t\right)^{1.9}\left(\ln\frac{e}{t}\right)}}\right)\frac{dt}{t}}>\frac{1}{\Gamma(2.9)}.
\end{align*}
Hence, the assumptions $(\mathbf{A}_{1})-(\mathbf{A}_{3})$ are satisfied. Therefore, by Theorem \ref{mainth}, the Hadamard fractional SBVP \eqref{ebvp} has a positive solution.
\end{ex}


\begin{thebibliography}{99}

\bibitem{miller} K.S. Miller, B. Ross, An Introduction to Fractional Calculus and Fractional Differential Equations, John Wiley and Sons, New York, 1993.

\bibitem{podlubny} I. Podlubny, Fractional Differential Equations. An Introduction to Fractional Derivatives, Fractional Differential Equations, to Methods of Their Solutions and Some of Their Applications, Mathematics in Science and Enginnering, vol. 198, Academic Press, San Diego, 1999.

\bibitem{kilbas} A.A. Kilbas, H.M. Srivastava, J.J. Trujillo, Theory and Applications of Fractional Differential Equations, North-Holland Mathematics Studies, vol. 204, Elsevier Science B.V., Amsterdam, 2006.

\bibitem{dhelm} K. Diethelm, The Analysis of Fractional Differential Equations: An Application-Oriented Exposition Using Differential Operators of Caputo Type, Springer, 2010.

\bibitem{szhang} S. Zhang, Positive solutions to singular boundary value problem for nonlinear fractional differential equation, Comput. Math. Appl. (2010) 59:1300-9.

\bibitem{zhaliuwulu} X. Zhang, L. Liu, Y. Wu, Y. Lu, The iterative solutions of nonlinear fractional differential equations, Appl. Math. Comp. 219 (2013) 4680-4691.

\bibitem{ahmad} B. Ahmad, A. Alsaedi, S.K. Ntouyas, J. Tariboon, Hadamard-Type Fractional Differential Equations, Inclusions and Inequalities, Springer, Cham, Switzerland, 2017.

\bibitem{jong} K.S. Jong, H.C. Choi, Y.H. Ri, Existence of positive solutions of a class of multi-point boundary value
problems for $p$-Laplacian fractional differential equations with singular source terms, Commun. Nonlinear Sci. Numer. Simul. 2019, 72, 272-281.

\bibitem{agaluc} R.P. Agarwal, R. Luca, Positive solutions for a semipositone singular Riemann-Liouville fractional
differential problem, Int. J. Nonlinear Sci. Numer. Simul. 2019, 20, 823-831.

\bibitem{asif1} N.A. Asif, Symmetric positive solutions for a fractional singular integro-differential boundary value problem in presence of Caputo-Fabrizio fractional derivative, arXiv:1909.00377 [math.CA].

\bibitem{wang} F. Wang, L. Liu, Y. Wu, A numerical algorithm for a class of fractional BVPs with $p$-Laplacian operator and singularity-the convergence and dependence analysis, Appl. Math. Comput. 2020, 382, 1-13.

\bibitem{alsaluca} A. Alsaedi, R. Luca, B. Ahmad, Existence of positive solutions for a system of singular fractional boundary value problems with $p$-Laplacian operators, Mathematics 2020, 8, 1890, 1-18.

\bibitem{xuliubaiwu} J. Xu, L. Liu, S. Bai, Y. Wu, Solvability for a system of Hadamard fractional multi-point boundary value problems, Nonl. Anal.: Mode. Cont. 26, 3, 502-521.

\bibitem{agarwaloregan} R.P. Agarwal, D. O'Regan, Singular Differential and Integral Equations with Applications, Kluwer, Dordrecht, 2003.

\bibitem{asif} N.A. Asif, Existence and Multiplicity Results for Systems of Singular Boundary Value Problems, arXiv:1901.11244 [math.CA].

\bibitem{guo} D. Guo, V. Lakshmikantham, Nonlinear Problems in Abstract Cones, Academic Press, New York, 1988.

\end{thebibliography}
\end{document}